\newtheorem{thm}{Theorem}[section]
\newtheorem{lemma}[thm]{Lemma}
\newtheorem{proposition}[thm]{Proposition}
\newtheorem{corollary}[thm]{Corollary}
\newtheorem{question}{Question}
\theoremstyle{definition}
\newtheorem{remark}[thm]{Remark}
\newcommand{\w}{\widetilde}
\newcommand{\wi}{\widehat}
\newcommand{\Sing}{\operatorname{Sing}}
\newcommand{\Exc}{\operatorname{Exc}}
\newcommand{\codim}{\operatorname{codim}}
\newcommand{\Eff}{\operatorname{Eff}}
\newcommand{\Mov}{\operatorname{Mov}}
\newcommand{\Nef}{\operatorname{Nef}}
\newcommand{\Pic}{\operatorname{Pic}}
\newcommand{\Bs}{\operatorname{Bs}}
\newcommand{\Sym}{\operatorname{Sym}}
\title[]{Positivity of anticanonical divisors from the viewpoint of Fano conic bundles}
\author{E. A.~Romano}
\address{Universit\`a di Torino,
Dipartimento di Matematica,
via Carlo Alberto 10,
10123 Torino - Italy}
\email{eleonoraanna.romano@unito.it}
\begin{document}
\maketitle

\begin{abstract}
We give the first examples of flat fiber type contractions of Fano manifolds onto varieties that are not weak Fano, and we prove that these morphisms are Fano conic bundles. We also review some known results about the interaction between the positivity properties of anticanonical divisors of varieties of contractions. 
\end{abstract}
\section{Introduction}
\noindent Given a contraction $\varphi\colon X\to Y$, \textit{i.e.} a surjective morphism with connected fibers between normal complex projective varieties, it is a natural and fundamental problem to try to understand the link between the positivity properties of the anticanonical divisors of our varieties. This matter can be viewed as a Positivity Problem for the anticanonical divisor of $Y$.

Assume that $X$, $Y$ are smooth. In \cite[Corollary 2.9]{K}, Koll$\acute{\text{a}}$r, Miyaoka and Mori proved that when such a morphism is smooth, and $X$ is Fano, then so is $Y$. Under the same assumption of smoothness of the morphism, in \cite[Theorem 1.1]{FUJ} Fujino and Gongyo showed that if $X$ is weak Fano (that is $-K_{X}$ is nef and big) then so is $Y$; while in \cite[Theorem 1.1]{BIR} Birkar and Chen proved that if $-K_{X}$ is semiample, then so is $-K_{Y}$.

Weaker results are known when $\varphi$ is not smooth: for instance, in \cite[Theorem 2.9]{PROK} Prokhorov and Shokurov proved that if $\varphi\colon X\to Y$ is a contraction and $X$ is log Fano, then $Y$ is also log Fano, in particular $-K_{Y}$ is big. We recall that a normal projective variety $X$ is log Fano if there exists an effective $\mathbb{Q}$-divisor $\bigtriangleup$ of $X$ such that $-(K_{X}+\bigtriangleup)$ has a positive multiple which is Cartier and ample and the pair $(X, \bigtriangleup)$ is klt (see \cite{KOLLAR} for the definition of klt). Moreover, we refer the reader to \cite[$\S$3.6]{DEB} for an overview and proofs of some results mentioned above.


Let us denote by $\mathcal{N}_{1}(X)$ the $\mathbb{R}$-vector space of one-cycles with real coefficients, modulo numerical equivalence, whose dimension is the \textit{Picard number} $\rho_{X}$. A \textit{Fano conic bundle} $f\colon X\to Y$ is a contraction where $X$ is smooth and Fano, and such that all fibers of $f$ are one-dimensional. We say that a Fano conic bundle $f\colon X\to Y$ is \textit{elementary} if $\rho_{X}-\rho_{Y}=1$, otherwise $f$ is called \textit{non-elementary}. We refer the reader to \cite{IO} for a detailed study of non-elementary Fano conic bundles. 

In this note we discuss some recent developments about the Positivity Problem introduced above from the viewpoint of Fano conic bundles, and 
our main goal is to construct some new examples. 

In \cite{WIS} Wi$\acute{\text{s}}$niewski posed a question related to our positivity problem: given a Fano conic bundle $f\colon X\to Y$, is $Y$ also Fano? In \cite[$\S$5, 5.2]{WIS} he gave an example of Fano conic bundle onto a variety that is not Fano but it is weak Fano. This example was generalized by Debarre in \cite[Example 3.16 (3)]{DEB}. Moreover, in \cite[footnote 14]{DEB} Debarre observed that Wi$\acute{\text{s}}$niewski's construction does not seem able to provide an example of Fano conic bundle whose target is not weak Fano. In this paper we overcome this problem, in fact with a modification of Wi$\acute{\text{s}}$niewski's construction we get the first examples of Fano conic bundles onto varieties that are not weak Fano, by proving the following result:

\begin{thm} \label{new_examples}
	For every $m\in \mathbb{Z}$, $m\geq 2$ there exists an elementary Fano conic bundle $f\colon X\to Y$ where $\dim{X}=3(m+1)$, and $-K_{Y}$ is not nef. 
\end{thm}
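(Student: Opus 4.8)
The plan is to adapt Wiśniewski's construction in a way that produces, for each $m\geq 2$, a Fano manifold $X$ together with an elementary contraction $f\colon X\to Y$ whose fibers are all conics, while arranging that $-K_Y$ fails to be nef. Since the desired statement is an existence result, I would work explicitly: I expect $X$ to be built as a projective bundle (or a blow-up thereof) over a suitable base, so that $\dim X = 3(m+1)$ emerges naturally from the construction. A natural first step is to take a Fano manifold $Y$ of dimension $3(m+1)-1$ on which $-K_Y$ is known to be \emph{not} nef, and then realize $X$ as a conic bundle over $Y$; but the real subtlety is that we need $X$ itself to be Fano, so the construction cannot simply bolt an arbitrary conic bundle onto a non-weak-Fano base.

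\smallskip

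Concretely, I would proceed as follows. First, I would fix the base of Wiśniewski's original example and identify exactly which geometric feature forces the target to be only weak Fano rather than non-nef; the key is to modify the blow-up/projective-bundle data so that some curve class $\gamma$ on $Y$ has $-K_Y\cdot\gamma<0$. Second, I would exhibit $X$ as a $\mathbb{P}^1$-bundle (or a divisor in a $\mathbb{P}^2$-bundle cutting out conics) over $Y$, compute $-K_X$ via the relative canonical formula $K_X = f^*K_Y + K_{X/Y}$, and verify that with the chosen twist $-K_X$ is ample by checking positivity against all extremal curve classes in $\NE(X)$. Third, I would confirm that $f$ is elementary, i.e. $\rho_X - \rho_Y = 1$, which for a $\mathbb{P}^1$-bundle is automatic, and that every fiber is one-dimensional (a conic), so that $f$ is genuinely a Fano conic bundle in the sense defined above. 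Finally, I would produce the explicit curve $\gamma\subset Y$ witnessing $-K_Y\cdot\gamma<0$, thereby showing $-K_Y$ is not nef.

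\smallskip

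I expect the main obstacle to be the simultaneous satisfaction of two tensions: making $-K_X$ ample (Fano) while forcing $-K_Y$ to be non-nef. Ampleness of $-K_X$ imposes strict positivity on infinitely many curve classes, and the presence of a curve $\gamma$ on $Y$ with $-K_Y\cdot\gamma<0$ threatens to destroy this, since curves in $X$ mapping onto $\gamma$ must still have positive anticanonical degree; the twist defining the conic bundle must be calibrated precisely so that the relative contribution $K_{X/Y}\cdot\gamma'$ overcompensates the negative base contribution $f^*K_Y\cdot\gamma'$. Verifying this requires a careful description of the Mori cone $\NE(X)$ and its extremal rays, which is where the bulk of the genuine work lies; the dependence on the parameter $m$ (and the resulting dimension $3(m+1)$) suggests that $m$ enters as a numerical weight in the projective-bundle data, and I would expect the inequality $m\geq 2$ to be exactly the threshold at which $-K_X$ stays ample while $-K_Y$ becomes non-nef.
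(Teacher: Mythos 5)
Your proposal is a strategy outline rather than a proof: no explicit $Y$, no explicit bundle data, and no explicit divisor is ever produced, so the existence claim is never actually established. Beyond that, one of your two suggested realizations is impossible and signals a conceptual gap. If $X\to Y$ were a $\mathbb{P}^1$-bundle, the morphism would be smooth, and then $X$ Fano forces $Y$ Fano by Koll\'ar--Miyaoka--Mori (cited as \cite[Corollary 2.9]{K} in the paper); equivalently, by Corollary \ref{Fano}, a Fano conic bundle without non-reduced fibers has Fano target. So any example witnessing the theorem \emph{must} have non-reduced fibers, and by Proposition \ref{PropWis} these must occur exactly along the curves on which $-K_Y$ is nonpositive, which must lie in $\Sing(\bigtriangleup_f)$. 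The degeneration of the conics to double lines over the non-nef locus is therefore forced, not optional, and your remark that elementariness ``for a $\mathbb{P}^1$-bundle is automatic'' shows the plan has not absorbed this constraint; in the paper elementariness instead comes from Theorem \ref{esempio}, since $\rho_Y=2$ and $Y$ is not Fano.

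You also mislocate where the genuine work lies. The paper takes $Y=\mathbb{P}_{\mathbb{P}^{3m}}(\mathcal{O}\oplus\mathcal{O}(2m)\oplus\mathcal{O}(2m))$, with $-K_Y=3D+(1-m)H$ negative on lines in the section $V$, and $X\in|2\xi-2mp^{*}H|$ inside $Z=\mathbb{P}_Y(\mathcal{O}(D)^{\oplus 2}\oplus\mathcal{O}(D+mH))$. Ampleness of $-K_X$ then requires no description of $\NE(X)$ at all: by adjunction $-K_Z-X=\xi+p^{*}H$ is the tautological class of $\mathbb{P}_Y(\mathcal{E}\otimes\mathcal{O}(H))$, a direct sum of ample line bundles, hence ample, and restriction to $X$ finishes — so the ``careful description of the Mori cone'' you anticipate as the bulk of the work is entirely sidestepped. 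The actual crux, absent from your plan, is the smoothness of the general member $X$: the linear system $|2\xi-2mp^{*}H|$ is \emph{not} basepoint-free, its base locus sits over $V$ (Lemma \ref{base_locus}: $\Bs{(|2D-2mH|)}=\Bs{(|2D-mH|)}=V$), so Bertini only gives smoothness away from $p^{-1}(V)$, while over $V$ the defining equation degenerates to $\sigma z_2^2=0$. The paper must then verify smoothness along $p^{-1}(V)\cap X$ by an explicit differential computation, choosing $s_1=\sigma'\nu_1$, $s_2=s_3=\sigma'\nu_2$ from the two equations $\nu_1,\nu_2$ cutting out $V=G_1\cap G_2$ transversally, so that $dF_{\mid W}=\sigma'_{\mid V}(z_0^2 d\nu_1+z_1(2z_0+z_1)d\nu_2)$ is nowhere zero. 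A general-position or Bertini-type appeal, which is all your plan offers at this point, would fail precisely over the locus that makes $-K_Y$ non-nef. (Minor slip: your opening step asks for ``a Fano manifold $Y$ on which $-K_Y$ is not nef,'' which is a contradiction in terms; you presumably meant a smooth projective $Y$.)
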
 

Notice that there exist examples where $f\colon X\to Y$ is a flat and not smooth contraction, with $X$ weak Fano and $-K_{Y}$ not nef (see \cite[Example 4.6]{FUJ}). Theorem \ref{new_examples} implies something stronger: $-K_{Y}$ is not always nef even if $X$ is a Fano manifold. 

We conclude with Section \ref{conclusion} where some open problems are presented. 
\section{Preliminaries}
\subsection{Notation and terminology} \label{notation}
We work over the field of complex numbers. Let $X$ be a smooth projective variety with arbitrary dimension $n$. 

$\mathcal{N}_{1}(X)$ (respectively, $\mathcal{N}^{1}(X)$) is the $\mathbb{R}$-vector space of one-cycles (respectively, Cartier divisors) with real coefficients, modulo numerical equivalence. 

$\dim{\mathcal{N}_{1}(X)}=\dim{\mathcal{N}^{1}(X)}=: \rho_{X}$ is the \textit{Picard number} of $X$.

Let $C$ be a one-cycle of $X$, and $D$ a divisor of $X$. We denote by $[C]$ (respectively, $[D]$) the numerical equivalence class in $\mathcal{N}_{1}(X)$ (respectively in $\mathcal{N}^{1}(X)$). 

We denote by $\Eff{(X)}$ the \textit{Effective cone} of $X$, that is the convex cone inside $\mathcal{N}^{1}(X)$ spanned by classes of effective divisors. 

Denote by $\Bs{(|\cdot|)}$ the base locus of a linear system. Let $D\subset X$ be a Cartier divisor. The divisor $D$ is \textit{movable} if there exists $m>0$, $m\in \mathbb{Z}$ such that $\codim{\Bs{|mD|}}\geq 2$.

We denote by $\Mov{(X)}$ the \textit{Movable cone} of $X$, that is the convex cone of $\mathcal{N}^{1}(X)$ spanned by classes of movable divisors. 

A \textit{contraction} of $X$ is a surjective morphism $\varphi\colon X\to Y$ with connected fibers, where $Y$ is normal and projective.

We denote by $\text{Exc}(\varphi)$ the \textit{exceptional locus} of $\varphi$, \textit{i.e.} the locus where $\varphi$ is not an isomorphism.

\subsection{Preliminaries on Fano conic bundles} \label{preliminaries_cb}
A contraction $f\colon X\to Y$ is a \textit{conic bundle} if $-K_{X}$ is $f$-ample and every fiber of $f$ is one-dimensional. When $X$ is Fano, we call such a contraction \textit{Fano conic bundle}.

The following theorem is due to Ando (\cite[Theorem 3.1]{ANDO}) and it is a generalization in higher dimension of Mori's result in dimension $3$ (see \cite[Theorem 3.5, (3.5.1)]{MORI}). For the second part we refer the reader to \cite[Proposition 1.2 (ii)]{B} and \cite[$\S$4]{WIS}.

An important consequence of this theorem is that conic bundles can be easily characterized among fiber type contractions of smooth varieties.
\begin{thm} [\cite{ANDO}, Theorem 3.1 (ii)]\label{Ando} Let $X$ be a smooth projective variety and let $f\colon X\to Y$ be a contraction where $-K_{X}$ is $f$-ample and every fiber is one-dimensional. Then $Y$ is smooth.
	
Moreover, there exists a locally free sheaf $\mathcal{E}$ on $Y$ of rank 3 such that the projection $\pi\colon \mathbb{P}(\mathcal{E})\to Y$ contains $X$ embedded over $Y$ as a divisor whose restriction to every fiber of $\pi$ is one-dimensional and it is an element of $|\mathcal{O}_{\mathbb{P}^{2}}(2)|$, and $f=\pi_{\mid X}$. The pushforward $f_{*} \mathcal{O}_{X}(-K_{X})$ can be taken as the above $\mathcal{E}$.
\end{thm}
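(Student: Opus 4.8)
My plan is to prove the three claims — smoothness of $Y$, existence of the rank-$3$ bundle $\mathcal{E}$, and the realization of $X$ as a fiberwise conic in $\pr(\mathcal{E})$ — in that order, since each feeds into the next. I begin by analyzing the fibers. As $-K_X$ is $f$-ample and every fiber is one-dimensional, $-K_X|_F$ is ample on each fiber $F$. Over the dense open locus where $f$ is smooth, a fiber $\ell$ is a smooth rational curve with normal bundle $N_{\ell/X}\cong f^*T_{Y,f(\ell)}\cong \mathcal{O}_{\pr^1}^{\oplus(\dim Y)}$; adjunction gives $-K_X\cdot\ell = -\deg K_\ell$, and ampleness of $-K_X|_\ell$ forces $\ell\cong\pr^1$ with $-K_X\cdot\ell = 2$, so $\ell$ is a smooth conic carrying $h^0(\ell,-K_X|_\ell) = 3$ and $h^1(\ell,-K_X|_\ell) = 0$.

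The main obstacle is the smoothness of $Y$, which I would deduce from deformation theory of the fibers inside $X$ rather than from any property of $Y$ itself. The decisive input is the vanishing $H^1(F,N_{F/X}) = 0$ for \emph{every} fiber $F$. For a general fiber this is immediate from the triviality of $N_{\ell/X}$; the delicate case is that of the degenerate conics (a pair of lines or a double line), where one must control the splitting type of the normal bundle along each component using $p_a(F) = 0$ and the $f$-ampleness of $-K_X$ — this local computation at the degenerate fibers is where the real work lies. Granting the vanishing, the Hilbert scheme of $X$ is smooth at each point $[F]$, of dimension $h^0(F,N_{F/X}) = \dim Y$. Because $f$ is a contraction its fibers are connected, so each $y\in Y$ corresponds to a single conic $F_y$ and $y\mapsto[F_y]$ injects $Y$ onto a component of this smooth Hilbert scheme; as $Y$ is normal and the identification is bijective and finite, Zariski's main theorem promotes it to an isomorphism, and $Y$ is smooth.

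Once $Y$ is smooth, flatness of $f$ follows by miracle flatness, since $X$ is Cohen–Macaulay, $Y$ is regular, and the fibers are equidimensional. Relative duality now identifies $-K_X|_F$ with $\omega_F^{-1}$ on every (Gorenstein) fiber, and $f$-ampleness together with $h^1(F,\omega_F^{-1}) = 0$ gives $R^{>0}f_*\mathcal{O}_X(-K_X) = 0$; cohomology and base change then show that $\mathcal{E} := f_*\mathcal{O}_X(-K_X)$ is locally free of rank $h^0(F,\omega_F^{-1}) = 3$. The surjection $f^*\mathcal{E}\twoheadrightarrow\mathcal{O}_X(-K_X)$, surjective because $\omega_F^{-1}$ is globally generated on each conic, defines a $Y$-morphism $j\colon X\to\pr(\mathcal{E})$ with $f=\pi\circ j$ and $j^*\mathcal{O}_{\pr(\mathcal{E})}(1) = \mathcal{O}_X(-K_X)$; on each fiber $j$ is the degree-$2$ embedding $F_y\hookrightarrow\pr^2$ given by $|\omega_{F_y}^{-1}|$, so $j$ is a closed immersion. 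Finally, comparing $\pi_*\mathcal{O}_{\pr(\mathcal{E})}(2) = \Sym^2\mathcal{E}$ with $f_*\mathcal{O}_X(-2K_X)$ via multiplication yields, fiber by fiber, the quadratic equation of the conic; these glue to a section of $\mathcal{O}_{\pr(\mathcal{E})}(2)\otimes\pi^*\mathcal{M}$ for a line bundle $\mathcal{M}$ on $Y$ whose zero scheme is exactly $X$, exhibiting $X$ as a relative divisor cutting each fiber of $\pi$ in an element of $|\mathcal{O}_{\pr^2}(2)|$, with $\mathcal{E}=f_*\mathcal{O}_X(-K_X)$ as asserted.
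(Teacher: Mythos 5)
The paper itself gives no proof of this statement: it quotes it from Ando \cite[Theorem 3.1]{ANDO}, with the second part referred to Beauville \cite[Proposition 1.2 (ii)]{B} and Wi\'sniewski \cite[\S 4]{WIS}. Measured against those sources, the second half of your argument (miracle flatness once $Y$ is smooth, fibers Gorenstein conics with $h^1(F,\omega_F^{-1})=0$, base change making $\mathcal{E}=f_*\mathcal{O}_X(-K_X)$ locally free of rank $3$, the relative embedding $j\colon X\hookrightarrow \mathbb{P}(\mathcal{E})$, and the quadratic equation cutting $X$) is the standard route and essentially what Beauville does; it is sound once smoothness of $Y$ is available.

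The genuine gap is in your smoothness step, and it is twofold. First, you explicitly ``grant'' the vanishing $H^1(F,N_{F/X})=0$ at degenerate fibers, in particular at the non-reduced ones (double lines); but this is precisely the hard content of the theorem, and it is not a routine splitting-type computation: before flatness is known you do not even know that the scheme-theoretic fiber is pure one-dimensional without embedded points, locally complete intersection, or has $p_a(F)=0$, so the sheaf $N_{F/X}$ and the invariants you invoke are not yet under control (note also that your Gorenstein/duality arguments for the fibers use flatness, which you derive only after smoothness). Second, the comparison with the Hilbert scheme is circular as written: to define the classifying morphism $y\mapsto [F_y]$ from $Y$ to the Hilbert scheme you need the family of scheme-theoretic fibers to be flat over $Y$ --- exactly what you postpone until after smoothness via miracle flatness. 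The standard repair reverses the arrow: take the component $H$ of the Hilbert scheme through a general fiber, show by a rigidity argument that every subscheme it parametrizes is contracted by $f$ and is in fact a whole scheme-theoretic fiber, obtain a morphism $H\to Y$ from the universal family, and then apply Zariski's main theorem; even so, the fiberwise $H^1$-vanishing must be proved, not assumed. For contrast, Mori (in dimension $3$) and Ando bypass the Hilbert scheme at generically reduced fibers by an elementary device --- through a reduced point of the fiber choose a general smooth divisor $D$ transverse to it, so that $f_{\mid D}$ is finite of degree $D\cdot F=1$ near $y$ and Zariski's main theorem gives $Y\cong D$ locally, hence $Y$ smooth --- and then treat the multiple fibers by a separate and genuinely delicate analysis (degree-two covers). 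Your sketch has not replaced that analysis; it has relocated it into the unproven vanishing.
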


By the above theorem, we get the following commutative diagram:
$$ \label{immersion}
\xymatrix{
	X\ar[dr]_f\ar@{^{(}->}[rr]^i && {\mathbb{P}(\mathcal{E})}\ar[dl]^{\pi}\\
	&{Y}&&&
}$$

Moreover, $X$ is defined by the vanishing of a section  $\sigma \in H^{0}(\mathbb{P}(\mathcal{E}), \mathcal{O}_{\mathbb{P}(\mathcal{E})}(2 \xi+ \pi^{*}M)$, where $M$ is a divisor of $Y$ and using the adjunction formula it is easy to check that  $M\sim -\det{\mathcal{E}}-K_{Y}$. 

In \cite{B}, Beauville introduced the following definition of the \textit{discriminant divisor} of a conic bundle $f\colon X\to Y$:
\begin{center} 
	$\bigtriangleup_{f}:=\{y\in Y\mid f^{-1}(y) \text{ is singular}\}$.
\end{center}

Using the same notation introduced above, we recall from \cite[$\S$1.7]{SARK} that
\begin{equation} \label{discriminant}
\bigtriangleup_{f} \in |(\det{\mathcal{E}})^{\otimes 2}\otimes \mathcal{O}_{Y}(3M)|
\end{equation}

We can express the regularity conditions of the fibers of $f$ in terms of the properties of $\bigtriangleup_{f}$. Indeed, we obtain the following explicit description of the singular locus of the discriminant divisor (see for instance \cite[$\S$2]{BARTH}, \cite[Proposition 1.8, (5.c)]{SARK}):
\begin{equation} \label{singular_discriminant}
	\Sing{(\bigtriangleup_{f})}=\{y\in Y\mid f^{-1}(y) \text{ is non-reduced}\}.
\end{equation}

Now, we recall the last results related to $\bigtriangleup_{f}$ that will be needed.

The following results are consequences of \cite[Proposition 4.3]{WIS} and its proof.
\begin{proposition} [\cite{WIS}] \label{PropWis}
	Let $f\colon X\to Y$ be a Fano conic bundle. Assume that $Y$ is not Fano. Let $C$ be a rational curve of $Y$ such that $-K_{Y}\cdot C\leq 0$. Then $C\subseteq \Sing{(\bigtriangleup_{f})}$. 
\end{proposition}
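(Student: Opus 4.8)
The plan is to argue by contradiction: assuming $C\not\subseteq\Sing(\bigtriangleup_f)$, I will produce an irreducible curve in $X$ dominating $C$ on which $-K_X$ is non-positive, contradicting the ampleness of $-K_X$. By the description \eqref{singular_discriminant} of $\Sing(\bigtriangleup_f)$, the hypothesis $C\not\subseteq\Sing(\bigtriangleup_f)$ says exactly that the fiber of $f$ over a \emph{general} point of $C$ is reduced, hence either a smooth conic or a pair of distinct lines. The whole argument takes place on the surface obtained by restricting the conic bundle to $C$. Set $k:=-K_Y\cdot C\le 0$ and $e:=\det\mathcal{E}\cdot C$. Recalling from Theorem \ref{Ando} that $K_{\mathbb{P}(\mathcal{E})}=-3\xi+\pi^*(K_Y+\det\mathcal{E})$ and that $M\sim-\det\mathcal{E}-K_Y$, adjunction on $X\in|2\xi+\pi^*M|$ gives the clean relation $-K_X=\xi_{\mid X}$, which I will use repeatedly. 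I would then set up the base change: let $\nu\colon\widetilde C\to C$ be the normalization, so $\widetilde C\cong\mathbb{P}^1$ since $C$ is rational. Pulling back $\mathcal{E}$ and the section $\sigma$ produces, inside the smooth $\mathbb{P}^2$-bundle $P:=\mathbb{P}(\nu^*\mathcal{E})\to\widetilde C$, a divisor $\widetilde S\in|2\xi+g^*\mathcal{O}(k-e)|$ with a conic bundle $g\colon\widetilde S\to\widetilde C$ and a finite morphism $h\colon\widetilde S\to X$. The point recorded here is that $\eta:=\xi_{\mid\widetilde S}=h^*(-K_X)$, so $\eta\cdot\Gamma>0$ for every curve $\Gamma\subset\widetilde S$ that dominates $\widetilde C$ (such a $\Gamma$ is not contracted by $h$, and $-K_X$ is ample).

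If the general fiber over $C$ is a smooth conic, I would resolve the singularities of $\widetilde S$ (a hypersurface in the smooth $P$, hence Gorenstein) to get a smooth $\mathbb{P}^1$-fibration $\widehat g\colon\widehat S\to\mathbb{P}^1$. The hypersurface adjunction $\xi_{\mid\widetilde S}=-K_{\widetilde S}-g^*\mathcal{O}(2-k)$ pulls back to $\eta=-K_{\widehat S}-\widehat g^*\mathcal{O}(2-k)$ on $\widehat S$ (up to discrepancy terms supported on the exceptional curves, which I arrange to avoid). On a smooth ruled surface over $\mathbb{P}^1$ there is always a section $\Sigma\cong\mathbb{P}^1$ with $\Sigma^2\le0$, namely the strict transform of the negative section of the relatively minimal model $\mathbb{F}_n$. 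Combining surface adjunction $-K_{\widehat S}\cdot\Sigma=\Sigma^2+2$ with the displayed formula yields $\eta\cdot\Sigma=\Sigma^2+k\le 0$, contradicting $\eta\cdot\Sigma>0$.

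If instead the general fiber is a pair of lines, I would avoid surface adjunction altogether and compute directly in the smooth threefold $P$. Possibly after the base change by the double cover of $\widetilde C$ that globalizes the two rulings, I may write $\widetilde S=\widetilde S_1\cup\widetilde S_2$ with each $\widetilde S_i\in|\xi+g^*D_i|$ and $\deg D_1+\deg D_2=k-e$. The relevant curve is now the node curve $N=\widetilde S_1\cap\widetilde S_2$, which dominates the base and hence is not contracted by $h$. Using the Chern relations $\xi^3=e$, $\xi^2\cdot(\text{fiber})=1$ and $g^*D_1\cdot g^*D_2=0$ on $P$, I get $\eta\cdot N=\xi\cdot\widetilde S_1\cdot\widetilde S_2=\xi^3+(\deg D_1+\deg D_2)=e+(k-e)=k\le 0$, again contradicting $\eta\cdot N>0$. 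In either case we reach a contradiction, so $C\subseteq\Sing(\bigtriangleup_f)$.

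I expect the main obstacle to be precisely the degenerate fibers. In the pair-of-lines case $\widetilde S$ need not be irreducible or normal, so the clean surface adjunction of the smooth-conic case is unavailable and one is forced to locate the right curve (the node locus) and perform the intersection-theoretic computation on the ambient bundle, after the auxiliary base change needed to separate the two rulings. A secondary but genuine technical point is the contribution of the \emph{non-reduced} fibers lying over the finitely many points of $C\cap\Sing(\bigtriangleup_f)$: these create surface singularities whose discrepancies must be controlled (by choosing the section $\Sigma$ to meet each bad fiber along a non-exceptional component) so that the formula $\eta\cdot\Sigma=\Sigma^2+k$ survives. Throughout, the recurring safeguard is to work only with curves that dominate $\widetilde C$, guaranteeing that their images in $X$ are honest curves and that ampleness of $-K_X$ forces the strict positivity needed for the contradiction.
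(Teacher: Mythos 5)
Your strategy is sound and is, in substance, the classical argument of Wi\'sniewski --- which is all the paper itself offers: Proposition \ref{PropWis} is stated as a consequence of \cite[Proposition 4.3]{WIS} and its proof, with no argument reproduced, and that original proof likewise pulls the conic bundle back over the normalization of $C$ and tests $-K_X=\xi_{\mid X}$ against a suitable curve of the resulting surface. Your setup is correct: the adjunction $-K_X=\xi_{\mid X}$, the class $\widetilde S\in|2\xi+g^{*}\mathcal{O}(k-e)|$, and the Case 2 computation $\xi\cdot\widetilde S_1\cdot\widetilde S_2=e+(k-e)=k\le 0$ all check out. Two observations simplify and solidify Case 2. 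Since $h\colon\widetilde S\to X$ is finite it contracts no curve whatsoever, so ampleness of $-K_X$ gives $\eta\cdot\Gamma>0$ for \emph{every} curve $\Gamma\subset\widetilde S$; your ``dominates $\widetilde C$'' safeguard is unnecessary. And the cycle $\widetilde S_1\cdot\widetilde S_2$ may well contain vertical components --- over each point of $\nu^{-1}(C\cap\Sing{(\bigtriangleup_{f})})$ both surfaces contain the double line --- but each such component contributes $\xi\cdot\ell=1>0$, so separating them only lowers the $\eta$-degree of the horizontal part and strengthens the contradiction. Note also that Case 2 never uses rationality of $C$: the component double cover may be branched (only over double-line fibers), so its source can have positive genus, but your computation is purely numerical.

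The one genuine soft spot is Case 1. First, a slip of language: the resolution carries a fibration with \emph{general} fiber $\mathbb{P}^1$, not a smooth $\mathbb{P}^1$-fibration; there are degenerate fibers over $\nu^{-1}(C\cap\bigtriangleup_{f})$. More seriously, your proposed control of the correction terms --- ``choosing the section $\Sigma$ to meet each bad fiber along a non-exceptional component'' --- is not available in general: when the relatively minimal model is $\mathbb{F}_n$ with $n\ge 1$, the \emph{only} section of non-positive square is the negative section, and its strict transform passes wherever it passes; you cannot move it off the exceptional locus. Moreover $\widetilde S$ need not even be normal in Case 1 (locally $z_0^{2}+t^{2}q(z_1,z_2)$ is singular along the entire line $z_0=t=0$ over a double-line fiber of $C\cap\Sing{(\bigtriangleup_{f})}$), so the adjunction you pull back also acquires a conductor term. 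The correct repair is not avoidance but sign bookkeeping, and the signs work in your favor: the conductor of the normalization of the Gorenstein hypersurface $\widetilde S$ is effective, and the discrepancies of the \emph{minimal} resolution of a normal surface are $\le 0$; both corrections are supported on vertical or exceptional curves, hence meet the horizontal section $\Sigma$ nonnegatively, and therefore only decrease $\eta\cdot\Sigma$, giving $0<\eta\cdot\Sigma\le\Sigma^{2}+k\le 0$, the desired contradiction. (This is also exactly where rationality of $C$ enters: it guarantees the relatively minimal model over $\widetilde C\cong\mathbb{P}^1$ is an $\mathbb{F}_n$ with a section of non-positive square.) With that replacement for your avoidance step, the proof is complete.
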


\begin{corollary} [\cite{WIS}] \label{Fano}
	Let $f\colon X\to Y$ be a Fano conic bundle. If $f$ does not have non-reduced fibers, then $Y$ is  Fano. Moreover, if $\dim{X}\leq 4$ or $\rho_{X}\leq 2$, then $Y$ is Fano.
\end{corollary}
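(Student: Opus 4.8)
The plan is to deduce everything from Proposition \ref{PropWis}, the equality \eqref{singular_discriminant}, and one auxiliary fact: if $Y$ is not Fano, then it carries a \emph{rational} curve $C$ with $-K_{Y}\cdot C\leq 0$. To obtain this fact, I observe that $X$ Fano is in particular log Fano (take the boundary to be $0$), so the result of Prokhorov and Shokurov \cite{PROK} recalled in the introduction shows that $Y$ is log Fano, hence of Fano type. For such $Y$ the Cone Theorem applies to a klt pair $(Y,\bigtriangleup)$ with $-(K_{Y}+\bigtriangleup)$ ample, so $\overline{\NE}(Y)$ is rational polyhedral and every extremal ray is generated by a rational curve. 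By Kleiman's criterion $-K_{Y}$ is ample if and only if it is strictly positive on $\overline{\NE}(Y)\setminus\{0\}$; since $Y$ is not Fano, $-K_{Y}$ must be non-positive on some extremal generator, which is the sought curve $C$.

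The first assertion is then immediate: if $f$ has no non-reduced fibers, then $\Sing(\bigtriangleup_{f})=\emptyset$ by \eqref{singular_discriminant}, so a curve $C$ as above cannot satisfy $C\subseteq\Sing(\bigtriangleup_{f})$, and Proposition \ref{PropWis} forces $Y$ to be Fano. The hypothesis $\rho_{X}\leq 2$ is handled separately: as $f$ is of fiber type, $\rho_{Y}\leq\rho_{X}-1\leq 1$, and $\rho_{Y}=1$ because $Y$ is projective of positive dimension; since $Y$ is dominated by the rationally connected $X$ it is uniruled, so $K_{Y}$ is not pseudoeffective, and in $\mathcal{N}^{1}(Y)\cong\mathbb{R}$ this places $-K_{Y}$ in the ample ray, i.e. $Y$ is Fano. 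Finally, if $\dim X\leq 4$ then $\dim Y\leq 3$; when $\dim Y\leq 2$ the divisor $\bigtriangleup_{f}$ satisfies $\dim\Sing(\bigtriangleup_{f})\leq\dim\bigtriangleup_{f}-1=\dim Y-2\leq 0$, so $\Sing(\bigtriangleup_{f})$ contains no curve and again $Y$ is Fano.

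The remaining case $\dim Y=3$ (that is, $\dim X=4$) is the crux, since now $\Sing(\bigtriangleup_{f})$ may be a curve and the dimension count breaks down. Here I would assume $Y$ not Fano, take the rational curve $C\subseteq\Sing(\bigtriangleup_{f})$ from the first paragraph, and study the surface $S:=(f^{-1}(C))_{\mathrm{red}}$: since every fiber over $C$ is a double line $2\ell$, the surface $S$ is a $\pr^{1}$-bundle over $C\cong\pr^{1}$ with ruling $\ell$. From the description in Theorem \ref{Ando} one gets $-K_{X}=\xi_{\mid X}$, so $(-K_{X})_{\mid S}=\xi_{\mid S}$ is ample on $S$ and meets $\ell$ in degree $1$. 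Realizing $S$ as $\pr(\mathcal{G})$ for the rank-two quotient $\mathcal{G}$ of $\mathcal{E}_{\mid C}$ that cuts out the lines $\ell$, with complementary sub-line-bundle $\mathcal{L}\subset\mathcal{E}_{\mid C}$, an adjunction computation on $S\subset\pr(\mathcal{E}_{\mid C})$ expresses $-K_{Y}\cdot C$ through $\deg\mathcal{G}$ and $\deg\mathcal{L}$.

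The main obstacle is precisely this last point: one must convert the smoothness of the total space $X$ along $S$ into a bound on the sub-line-bundle $\mathcal{L}$ sharp enough to force $-K_{Y}\cdot C>0$, contradicting $-K_{Y}\cdot C\leq 0$ and thereby proving $Y$ Fano. This is also where the dimension hypothesis is genuinely used, for when $\dim Y\geq 4$ the locus $\Sing(\bigtriangleup_{f})$ may contain curves lying in more degenerate conic families where no such bound is available; Theorem \ref{new_examples} confirms that the statement indeed fails once $\dim X$ grows.
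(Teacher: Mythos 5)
Your reduction of the first assertion to Proposition \ref{PropWis} together with \eqref{singular_discriminant} is correct, and so is the auxiliary fact you use: if $Y$ is not Fano one needs a \emph{rational} curve $C$ with $-K_{Y}\cdot C\leq 0$, and your route through \cite{PROK} (so that $Y$ is of Fano type, hence $\overline{\NE}(Y)$ is rational polyhedral and generated by rational curves) does supply it, though with much heavier machinery than the source: the paper simply quotes this corollary from \cite[Proposition 4.3]{WIS} \emph{and its proof}, where the existence of such a curve is handled directly for the smooth rationally connected $Y$. Your treatment of $\rho_{X}\leq 2$ (forcing $\rho_{Y}=1$, then using uniruledness of $Y$ to exclude $K_{Y}$ pseudoeffective) is also fine. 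One small caveat in the $\dim{Y}\leq 2$ count: the inequality $\dim{\Sing{(\bigtriangleup_{f})}}\leq\dim{\bigtriangleup_{f}}-1$ presupposes that $\bigtriangleup_{f}$ is generically reduced along each component; this is true for conic bundles with smooth total space (equivalently, non-reduced fibers occur in codimension $\geq 2$, as the paper asserts when it says $\Sing{(\bigtriangleup_{f})}$ is properly contained in $\bigtriangleup_{f}$), but it deserves a citation or justification rather than being used silently.

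The genuine gap is the case $\dim{X}=4$, $\dim{Y}=3$, which you correctly identify as the crux and then leave open. This is not a removable subcase: non-reduced fibers really do occur for Fano conic bundles, and $\Sing{(\bigtriangleup_{f})}$ can be a curve in a threefold, so no dimension count can close it, and without it the hypothesis ``$\dim{X}\leq 4$'' in the statement is simply not proved. Your setup --- the surface $S=(f^{-1}(C))_{\mathrm{red}}$, a $\pr^{1}$-bundle over the normalization of $C$ with $(-K_{X})\cdot\ell=1$ on the rulings, and the adjunction computation on $S\subset\pr(\mathcal{E}_{\mid C})$ --- is exactly the right cast of characters, but the decisive step, converting smoothness of $X$ along $S$ into a degree bound on the sub-line-bundle $\mathcal{L}\subset\mathcal{E}_{\mid C}$ sharp enough to force $-K_{Y}\cdot C>0$, is precisely the content of Wi\'sniewski's proof of \cite[Proposition 4.3]{WIS} that the paper invokes (this is why the paper cites the proposition ``and its proof'': the corollary for $\dim{X}\leq 4$ does not follow from the bare statement of Proposition \ref{PropWis}). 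As written, your proposal establishes the first assertion and the $\rho_{X}\leq 2$ case, but only sketches, without completing, the low-dimensional case; announcing the needed estimate as ``the main obstacle'' does not discharge it.
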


We are going to study our Positivity Problem for the anticanonical divisors from the viewpoint of Fano conic bundles. 

We focus on the following problem studied in \cite[$\S$4]{WIS} by Wi$\acute{\text{s}}$niewski: given a Fano conic bundle $f\colon X\to Y$, is $Y$ Fano or not?

The following result arises from the study of non-elementary Fano conic bundles of \cite{IO}. In particular, it is a consequence of \cite[Theorem 1.1]{IO} and allows us to give an answer to Wi$\acute{\text{s}}$niewski's question, in many cases.
 
\begin{thm} [\cite{IO}, Corollary 1.2] \label{esempio} Let $f\colon X\to Y$ be a Fano conic bundle. If $Y$ is not Fano, then $\rho_{X}-\rho_{Y}=2$ or $\rho_{X}-\rho_{Y}=1$. If moreover $\rho_{Y}\leq 2$, or $Y$ does not have a smooth $\mathbb{P}^{1}$-fibration\footnote{A smooth $\mathbb{P}^{1}$-fibration is a smooth morphism with fibers isomorphic to $\mathbb{P}^{1}$.}, then $\rho_{X}-\rho_{Y}=1$.
\end{thm}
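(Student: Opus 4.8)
The plan is to leverage the structure theory of Fano conic bundles together with the cited Theorem~1.1 of~\cite{IO}, whose conclusion constrains the ``defect'' $\rho_X-\rho_Y$ once one knows that $Y$ fails to be Fano. First I would recall that, by Theorem~\ref{esempio}'s hypothesis and the classification in~\cite{IO}, a non-Fano target of a Fano conic bundle forces the difference $\rho_X-\rho_Y$ to lie in a very short list; the whole point is to show that list is exactly $\{1,2\}$. The starting observation is that $f$ being a (Fano) conic bundle means $-K_X$ is $f$-ample with one-dimensional fibers, so $f$ is of fiber type and the relative Picard number $\rho_X-\rho_Y$ records how many extremal rays of $\NE(X)$ are contracted to points by $f$ beyond the relative dimension one. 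Since every fiber is a plane conic (by Theorem~\ref{Ando}, $X$ sits in $\mathbb{P}(\mathcal{E})$ over $Y$ as a family of conics), the possible degenerations of fibers are limited, and this is what bounds the relative Picard number.

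Next I would invoke Proposition~\ref{PropWis} to exploit the failure of Fano-ness of $Y$: choosing a rational curve $C\subseteq Y$ with $-K_Y\cdot C\le 0$, which exists precisely because $Y$ is not Fano, Proposition~\ref{PropWis} places $C$ inside $\Sing(\bigtriangleup_f)$, i.e.\ over $C$ the conic bundle acquires non-reduced fibers by~\eqref{singular_discriminant}. The non-reducedness of fibers along $C$ is the mechanism that produces extra contracted curve classes in $X$, and I would argue that this forces $\rho_X-\rho_Y\ge 1$ automatically (which is trivial for fiber type) but, more usefully, caps it: a one-dimensional conic degenerating to a non-reduced (double line) fiber contributes at most a bounded amount to the relative cohomology, so $\rho_X-\rho_Y\le 2$. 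This is where Corollary~1.2 of~\cite{IO} is cited as doing the heavy lifting; in a self-contained argument one would trace through~\cite[Theorem~1.1]{IO}, which analyzes exactly the numerical classes of the contracted rays of a non-elementary Fano conic bundle and shows the only way to have $\rho_X-\rho_Y\ge 3$ would be incompatible with $Y$ being non-Fano.

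For the refined statement, I would then add the two extra hypotheses one at a time. If $\rho_Y\le 2$, the possible extremal structure of $Y$ is so restricted that the second contracted ray promised by the $\rho_X-\rho_Y=2$ case cannot coexist with a non-Fano $Y$ of such small Picard number; concretely, the classification forces any $\rho_X-\rho_Y=2$ example to have $Y$ carrying a smooth $\mathbb{P}^1$-fibration, and with $\rho_Y\le 2$ one checks such a fibration would make $Y$ Fano, a contradiction. This is the reason the footnote on smooth $\mathbb{P}^1$-fibrations appears: the alternative hypothesis, namely that $Y$ does \emph{not} admit a smooth $\mathbb{P}^1$-fibration, directly excludes the $\rho_X-\rho_Y=2$ case by the same structural dichotomy in~\cite{IO}, leaving only $\rho_X-\rho_Y=1$. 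I would close by assembling these: the general bound gives $\{1,2\}$, and either refined hypothesis collapses the value $2$.

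The main obstacle will be the second step, namely pinning down why $\rho_X-\rho_Y$ cannot exceed $2$ and why the value $2$ specifically compels the existence of a smooth $\mathbb{P}^1$-fibration on $Y$. This is precisely the content of~\cite[Theorem~1.1]{IO} rather than something derivable from the preliminaries quoted here, so the honest proof is a reduction to that theorem together with a careful bookkeeping of the contracted rays; the delicate part is verifying that the non-reduced fibers detected by Proposition~\ref{PropWis} correspond to exactly the extremal contraction described in~\cite{IO} and not to some additional, unaccounted-for, numerical class.
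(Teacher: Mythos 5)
You correctly recognize, in your closing paragraph, that this statement is not provable from the preliminaries of the present paper: indeed the paper offers no proof at all, quoting the result verbatim from \cite[Corollary 1.2]{IO} (as a consequence of \cite[Theorem 1.1]{IO}), so your reduction-to-citation is exactly the paper's treatment. The problems lie in the independent mechanisms you sketch around that citation, and two of them are genuinely wrong rather than merely heuristic.

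First, the claim that a non-reduced fiber ``contributes at most a bounded amount to the relative cohomology, so $\rho_X-\rho_Y\le 2$'' is not a real argument and is false as a mechanism: degenerate fibers of a Fano conic bundle can make $\rho_X-\rho_Y$ arbitrarily large up to dimension constraints --- e.g.\ a del Pezzo surface of degree $1$ is a conic bundle over $\mathbb{P}^1$ with $\rho_X-\rho_Y=8$. The bound $\rho_X-\rho_Y\le 2$ holds \emph{only under the hypothesis that $Y$ is not Fano}, and it comes from the factorization/structure theorem of \cite[Theorem 1.1]{IO}, not from counting classes supported on degenerate conics. Second, your treatment of the $\rho_Y\le 2$ clause fails: you assert that a smooth $\mathbb{P}^1$-fibration on $Y$ with $\rho_Y\le 2$ ``would make $Y$ Fano,'' but Hirzebruch surfaces $\mathbb{F}_n$ with $n\ge 2$ have $\rho=2$, carry a smooth $\mathbb{P}^1$-fibration, and are not Fano. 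The correct content of \cite[Corollary 1.2]{IO} is the contrapositive bookkeeping: when $Y$ is not Fano and $\rho_X-\rho_Y=2$, the structure theorem forces \emph{both} that $Y$ admits a smooth $\mathbb{P}^1$-fibration \emph{and} that $\rho_Y\ge 3$; hence either of the extra hypotheses ($\rho_Y\le 2$, or no smooth $\mathbb{P}^1$-fibration on $Y$) excludes the value $2$ directly, with no Fano-ness check on low Picard number fibrations. Your use of Proposition \ref{PropWis} to place a rational curve with $-K_Y\cdot C\le 0$ inside $\Sing(\bigtriangleup_f)$ is fine as motivation (it matches Corollary \ref{Fano}), but it plays no logical role in the actual deduction from \cite{IO}.
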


Thanks to the above theorem we deduce that the target of a non-elementary Fano conic bundle is often Fano. 
In the next Section we use Theorem \ref{esempio} to prove that the new examples of Fano conic bundles are elementary.

\section{New examples of elementary Fano conic bundles} \label{new}
In this section we give examples of Fano conic bundles $f\colon X\to Y$, where $-K_{Y}$ is not nef. 
Putting our construction in a more general setting, we get the first examples of equidimensional fiber type contractions of Fano varieties onto varieties $Y$ which are not weak Fano. Moreover, since we obtain equidimensional morphisms between smooth projective varieties, these are flat morphisms, and by Corollary \ref{Fano} we deduce that they have non-reduced fibers.   \\

Let $m\in \mathbb{Z}$, $m\geq 2$, and set\footnote{As usual, we follow Grothendieck's notation: for a vector bundle $\mathcal{E}$, the projectivization $\mathbb{P}(\mathcal{E})$ is the space of \textit{hyperplanes} in the fibers of $\mathcal{E}$.} $Y=\mathbb{P}_{\mathbb{P}^{3m}}(\mathcal{O}_{\mathbb{P}^{3m}}\oplus \mathcal{O}_{\mathbb{P}^{3m}}(2m)\oplus \mathcal{O}_{\mathbb{P}^{3m}}(2m))$.

One has $\rho_{Y}=2$, in particular $\mathcal{N}^{1}(Y)$ is generated by the class of a divisor $D$ associated with the line bundle $\mathcal{O}_{Y}(1)$ and by the class of the pullback $\textit{H}$ of a hyperplane in $\mathbb{P}^{3m}$. Let $\pi\colon Y\to \mathbb{P}^{3m}$ be the natural projection. Notice that $D$ is a nef divisor, not ample, and that it is globally generated (see for instance \cite[$\S$2.3, Lemma 2.3.2]{LAZ}). Since $D$ and $H$ are both nef, not ample divisors of $Y$ and $\rho_{Y}=2$, then $\Nef{(Y)}=\langle [D],[H] \rangle$ (see Figure \ref{fig:fig5}).

Denote by $V\subset Y$ the section corresponding to the trivial quotient $\mathcal{O}_{\mathbb{P}^{3m}}\oplus \mathcal{O}_{\mathbb{P}^{3m}}(2m)\oplus \mathcal{O}_{\mathbb{P}^{3m}}(2m)\twoheadrightarrow \mathcal{O}_{\mathbb{P}^{3m}}$, so that $D_{\mid V}\cong \mathcal{O}_{V}$. We have:
\begin{center}
	$-K_{Y}=3D+(1-m)H$
\end{center}
so that $-K_{Y}$ is not nef, indeed if $C$ is a line contained in $V\cong \mathbb{P}^{3m}$, then $-K_{Y}\cdot C=1-m<0$.

The section $V$ is a complete intersection of two divisors $G_{1}$, $G_{2}\sim D-2mH$. Indeed by the two possible factorizations of the trivial quotient 
\begin{center}
$\mathcal{O}_{\mathbb{P}^{3m}}\oplus \mathcal{O}_{\mathbb{P}^{3m}}(2m)\oplus \mathcal{O}_{\mathbb{P}^{3m}}(2m)\twoheadrightarrow \mathcal{O}_{\mathbb{P}^{3m}}\oplus \mathcal{O}_{\mathbb{P}^{3m}}(2m)\twoheadrightarrow \mathcal{O}_{\mathbb{P}^{3m}}$,
\end{center}
we have the immersions: 
\begin{center}
	$V\cong \mathbb{P}^{3m}=\mathbb{P}(\mathcal{O}_{\mathbb{P}^{3m}})\hookrightarrow G_{1}:=\mathbb{P}(\mathcal{O}_{\mathbb{P}^{3m}}\oplus \mathcal{O}_{\mathbb{P}^{3m}}(2m))\hookrightarrow Y$ \\
	$V\hookrightarrow G_{2}:=\mathbb{P}(\mathcal{O}_{\mathbb{P}^{3m}}\oplus \mathcal{O}_{\mathbb{P}^{3m}}(2m))\hookrightarrow Y$
\end{center}
and $V$ is given by the intersection between $G_{1}$ and $G_{2}$. For $i=1,2$ we have $\mathcal{O}_{G_{i}}(1)=D_{\mid G_{i}}$ and:
\begin{center}
	$-K_{G_{i}}= 2 D_{\mid G_{i}}+(m+1)H_{\mid G_{i}}$	
\end{center}
and by the adjunction formula we find that:
\begin{center}
	$K_{G_{i}}=(G_{i}-3D+(m-1)H)_{\mid G_{i}}$
\end{center}
so that $(G_{i}-D+2mH)_{\mid G_{i}}\sim 0$. But $\rho_{G_{i}}=\rho_{Y}=2$ and the restriction $\Pic{(Y)}\to \Pic{(G_{i})}$ is an isomorphism, then $G_{i}\sim D-2mH$.

Since $D$ is globally generated, the map associated to a sufficiently large multiple of $D$ is a contraction, which we denote by $\varphi\colon Y\to Y^{\prime}$. 

Using that $D_{\mid V}\cong \mathcal{O}_{V}$, it is easy to check that $\varphi$ contracts $V$ to a point, hence $V\subseteq \Exc{(\varphi)}$.

Notice that $\Exc{(\varphi)}=V$. Indeed, if $C$ is an irreducible curve of $Y$ such that $\varphi(C)$ is a point, since $\varphi_{*}(C)=0$ one has that $D\cdot C=0$, instead $H\cdot C>0$, then $G_{i}\cdot C<0$ for $i=1,2$, and $\Exc{(\varphi)}\subseteq G_{1}\cap G_{2}=V$. In particular, we observe that $\varphi$ is a small elementary contraction.\\ \\
\textit{Effective and Movable Cone of $Y$}. Let us denote by $g\colon Y\dashrightarrow \tilde{Y}$ the flip of $\varphi\colon Y\to Y^{\prime}$. We want to describe $\Eff{(Y)}$, by proving that:
\begin{center}
	$\Eff{(Y)}=\Mov{(Y)}=\Nef{(Y)}\cup g^{*}\Nef{(\tilde{Y})}$.	
\end{center}

To this end, we show that the divisor $D-2mH$ gives a fiber type contraction on $\tilde{Y}$, so that $[D-2mH]$ sits in the boundary of the Effective and Movable Cone of $Y$ (see Figure \ref{fig:fig5} below).

Recall that $G_{1}\sim G_{2}\sim D-2mH$. Let $i\in \{1,2\}$. First we show that in $Y$ we have $G_{i}\cdot C=0$, for every irreducible curve $C\subset G_{i}$ such that $C\cap V=\emptyset$. To this end, we take one of the two divisor $G_{i}$, for simplicity assume that it is $G_{1}$. If $C\subset G_{1}$ is an irreducible curve such that $C\cap V=\emptyset$, by recalling that $G_{1}\cap G_{2}=V$ and $G_{1}\sim G_{2}$, it follows that $G_{2}\cap C=\emptyset$, and $G_{1}\cdot C=G_{2}\cdot C=0$.

Recall that $G_{1}$ is a $\mathbb{P}^{1}$-bundle over $\mathbb{P}^{3m}$ and that $V\subset G_{1}$ is a section. Denote by ${\tilde{G}}_{1}\subset \tilde{Y}$ the transform of $G_{1}$ through the flip $g$.
 
We observe that $g_{\mid G_{1}}\colon G_{1}\to {\tilde{G}}_{1}$ is the divisorial contraction $\varphi_{\mid G_{1}}$ which sends $V$ to a point, and $\rho_{{\tilde{G}}_{1}}=1$. By what we have already shown, $\tilde{G}_{1}\cdot \Gamma=0$ for a general irreducible curve $\Gamma\subset \tilde{G}_{1}$, and since $\rho_{\tilde{G}_{1}}=1$, it follows that $\tilde{G}_{1}\cdot \tilde{\Gamma}=0$ for every irreducible curve $\tilde{\Gamma}\subset \tilde{G}_{1}$. 

Then $\tilde{G}_{1}$ is a nef divisor of $\tilde{Y}$, and being $\tilde{Y}$ a toric variety, $\tilde{G}_{1}$ is semiample and gives a contraction onto $\mathbb{P}^{1}$ which sends itself to a point. In Figure \ref{fig:fig5} we represent the cones of $Y$. 
\begin{figure}[ht]
	\centering
    \includegraphics[scale=0.35]{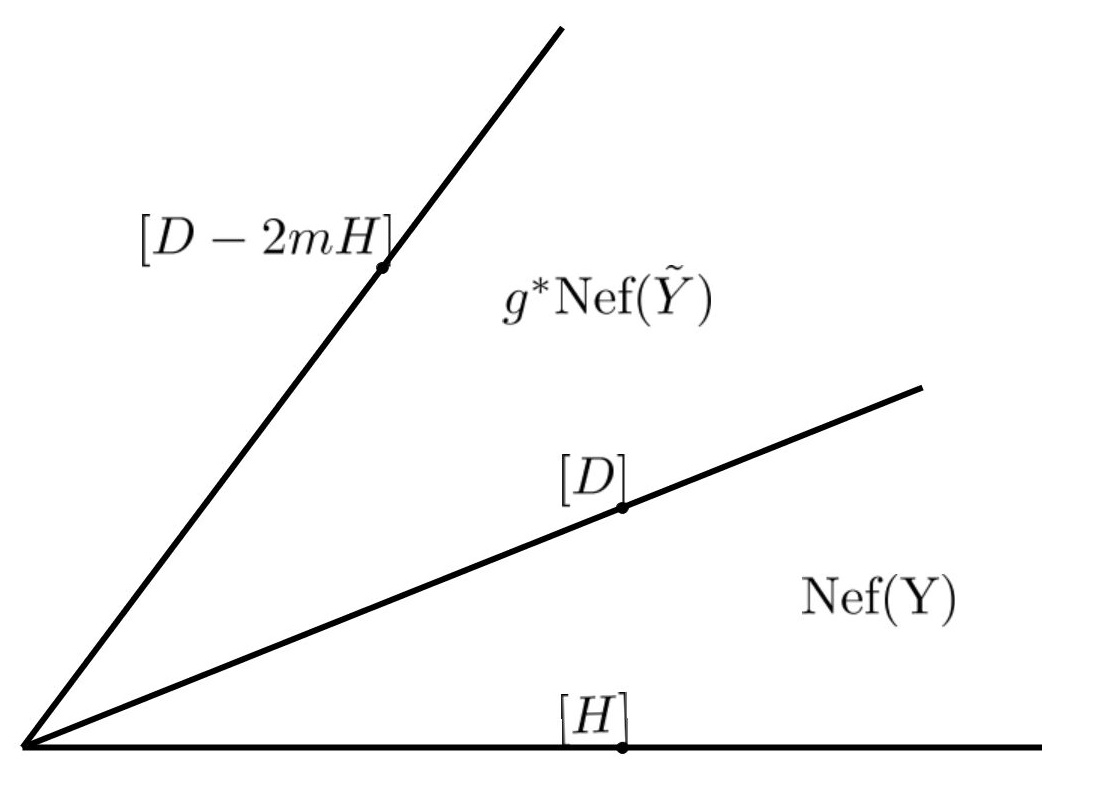}
	\caption{We denote by $g\colon Y\dashrightarrow \tilde{Y}$ the flip of the small contraction $\varphi\colon Y\to Y^{\prime}$. We get $\Eff{(Y)}=\Mov{(Y)}=\Nef{(Y)}\cup g^{*}\Nef{(\tilde{Y})}$.}
	\label{fig:fig5}
\end{figure}
\begin{lemma} \label{base_locus}
	Notation as above. Then $\Bs{(|2D-2mH|)}= \Bs{(|D-mH|)}= \Bs{(|2D-mH|)}=V$.
\end{lemma}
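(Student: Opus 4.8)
The plan is to compute the three linear systems explicitly through the projective bundle structure $\pi\colon Y\to \mathbb{P}^{3m}$. Writing $\mathcal{E}=\mathcal{O}\oplus\mathcal{O}(2m)\oplus\mathcal{O}(2m)$, Grothendieck's convention gives $\pi_{*}\mathcal{O}_{Y}(aD)=\Sym^{a}\mathcal{E}$, so by the projection formula
$$H^{0}(Y,\mathcal{O}_{Y}(aD+bH))=H^{0}\Big(\mathbb{P}^{3m},\Sym^{a}\mathcal{E}\otimes\mathcal{O}(b)\Big).$$
Decomposing $\Sym^{a}\mathcal{E}=\bigoplus_{i_{0}+i_{1}+i_{2}=a}\mathcal{O}(2m(i_{1}+i_{2}))$ and letting $x_{0},x_{1},x_{2}$ be the monomial generators of $\Sym\mathcal{E}$ associated with the summands $\mathcal{O}$, $\mathcal{O}(2m)$, $\mathcal{O}(2m)$ respectively, every global section of $aD+bH$ is a polynomial $\sum P_{i_{0}i_{1}i_{2}}\,x_{0}^{i_{0}}x_{1}^{i_{1}}x_{2}^{i_{2}}$ whose coefficient $P_{i_{0}i_{1}i_{2}}$ is a form on $\mathbb{P}^{3m}$ of degree $b+2m(i_{1}+i_{2})$ (necessarily zero when this degree is negative). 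First I would record that $V$ is cut out by $x_{1}=x_{2}=0$, this being precisely the section attached to the quotient $\mathcal{E}\twoheadrightarrow\mathcal{O}$, on which $x_{1},x_{2}$ vanish and $x_{0}$ does not.

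The inclusion $V\subseteq\Bs$ is then immediate and uniform. In each of the three cases the relevant value of $b$ is strictly negative ($b=-m$ for $D-mH$ and $2D-mH$, and $b=-2m$ for $2D-2mH$), so the coefficient $P_{a00}$ of the pure monomial $x_{0}^{a}$ has negative degree and hence vanishes; consequently every monomial occurring in any section is divisible by $x_{1}$ or $x_{2}$, and every section vanishes along $V$. Equivalently one may argue by restriction: since $D_{\mid V}\cong\mathcal{O}_{V}$ and $H_{\mid V}\cong\mathcal{O}_{\mathbb{P}^{3m}}(1)$, one gets $(aD+bH)_{\mid V}\cong\mathcal{O}_{V}(b)$ with $b<0$, so $H^{0}(V,(aD+bH)_{\mid V})=0$ and the restriction map kills every section.

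For the reverse inclusion $\Bs\subseteq V$ I would exhibit, for each point $p\notin V$, a section not vanishing at $p$. If, say, $x_{1}(p)\neq 0$, then the monomial $x_{1}^{a}$ has coefficient degree $b+2ma\geq 0$ in all three cases (namely $m$, $2m$, $3m$ respectively), so by global generation of $\mathcal{O}_{\mathbb{P}^{3m}}(b+2ma)$ there is a form $P$ with $P(\pi(p))\neq 0$; the section $P\,x_{1}^{a}$ is then nonzero at $p$. The symmetric argument with $x_{2}$ disposes of the case $x_{2}(p)\neq 0$, giving $\Bs\subseteq V$ and hence equality in all three systems. A more conceptual route to this inclusion is available: each of the three classes lies in the interior of $g^{*}\Nef(\w{Y})$, so its transform under the flip $g$ is ample on the toric variety $\w{Y}$ and therefore base point free, and since $g$ is an isomorphism in codimension one away from $V$, the base locus can only be contained in $V$.

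The computation itself is routine; the only point requiring care is the bookkeeping of conventions — fixing Grothendieck's projectivization, matching $\pi_{*}\mathcal{O}_{Y}(aD)$ with $\Sym^{a}\mathcal{E}$, reading off the degree $b+2m(i_{1}+i_{2})$ of each coefficient, and correctly identifying $V=\{x_{1}=x_{2}=0\}$. Once these are pinned down, both inclusions follow at once from the single observation that the negative twist $b<0$ forbids the $x_{0}^{a}$ monomial while leaving $x_{1}^{a}$ and $x_{2}^{a}$ available.
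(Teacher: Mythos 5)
Your proof is correct, but it proceeds by a genuinely different mechanism than the paper's. You compute all sections explicitly: using Grothendieck's convention, $H^{0}(Y,aD+bH)=H^{0}(\mathbb{P}^{3m},\Sym^{a}\mathcal{E}\otimes\mathcal{O}(b))$ with $\mathcal{E}=\mathcal{O}\oplus\mathcal{O}(2m)^{\oplus 2}$, the coefficient degrees $b+2m(i_{1}+i_{2})$ are right, and $V=\{x_{1}=x_{2}=0\}$ is the correct identification of the section attached to $\mathcal{E}\twoheadrightarrow\mathcal{O}$ (note that $x_{1},x_{2}$ span $H^{0}(Y,\mathcal{O}_{Y}(D-2mH))$ and cut out precisely the divisors $G_{1},G_{2}$, i.e.\ they are the sections $\nu_{1},\nu_{2}$ used later in the proof of Proposition \ref{prop_ex}); then $b<0$ kills the $x_{0}^{a}$ monomial, giving $V\subseteq\Bs$, while $P\,x_{i}^{a}$ with $P$ of degree $b+2ma\geq 0$ gives $\Bs\subseteq V$. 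The paper instead never writes down a section: it gets $V\subseteq\Bs{(|2D-2mH|)}$ from the intersection-theoretic fact that $(2D-2mH)\cdot C<0$ for every curve $C\subset V$, gets $\Bs{(|D-2mH|)}\subseteq V$ from the complete-intersection description $V=G_{1}\cap G_{2}$ with $G_{i}\sim D-2mH$, and closes the circle formally via global generation: $H$ globally generated gives $\Bs{(|D-mH|)}\subseteq\Bs{(|D-2mH|)}$, producing the chain $V\subseteq\Bs{(|2D-2mH|)}\subseteq\Bs{(|D-mH|)}\subseteq\Bs{(|D-2mH|)}\subseteq V$, and then $D$ globally generated handles $|2D-mH|$ the same way. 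The paper's route is shorter and coordinate-free, exploiting only facts already established in the construction; yours costs some bookkeeping of conventions but buys a complete description of the three linear systems (hence also their dimensions and the shape of generic members, which is in the spirit of the matrix computation in Proposition \ref{prop_ex}), and your uniform restriction argument $(aD+bH)_{\mid V}\cong\mathcal{O}_{\mathbb{P}^{3m}}(b)$ with $b<0$ is a clean one-line replacement for the paper's curve computation. Your ``conceptual'' aside is also sound: all three classes lie in the interior of $g^{*}\Nef{(\tilde{Y})}=\langle [D],[D-2mH]\rangle$, nef divisors on toric varieties are globally generated, and the identification of sections across the flip is legitimate because a small modification induces an isomorphism on $H^{0}$; just note that this argument only delivers $\Bs\subseteq V$ away from the flipped locus, so it complements rather than replaces your explicit verification.
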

\begin{proof}
	For every curve $C\subset V$, we have $D\cdot C=0$ and $H\cdot C>0$, thus $(2D-2mH)\cdot C<0$, then $V\subseteq \Bs{(|2D-2mH|)}$. 
	
	We know that  $\Bs{(|2D-2mH|)}\subseteq \Bs{(|D-mH|)}$, thus we are left to prove that $\Bs{(|D-mH|)}\subseteq V$. 
	
	We have already observed that $V$ is a complete intersection of two divisors of $|D-2mH|$, hence $\Bs{(|D-2mH|)}\subseteq V$. Being $H$ globally generated, it follows that $\Bs{(|D-mH|)}\subseteq \Bs{(|D-2mH|)}$. 
	Hence we get the following inclusions: 
	\begin{center}
	$V\subseteq \Bs{(|2D-2mH|)}\subseteq \Bs{(|D-mH|)}\subseteq \Bs{(|D-2mH|)}\subseteq V$,
	\end{center} 
	so that $\Bs{(|2D-2mH|)}= \Bs{(|D-mH|)}=V$. Using the same method, we observe that $V\subseteq \Bs{(|2D-mH|)}$ and since $D$ is globally generated one has $\Bs{(|2D-mH|)}\subseteq \Bs{(|D-mH|)}=V$, thus $\Bs{(|2D-mH|)}=V$.  
\end{proof}
\begin{proposition} \label{prop_ex} Let $Y$ be as above. Let us consider the locally free sheaf $\mathcal{E}:=\mathcal{O}(D)^{\oplus 2}\oplus \mathcal{O}(D+mH)$ on $Y$ of rank $3$, and let $Z=\mathbb{P}_{Y}(\mathcal{E})\stackrel{p}{\longrightarrow} Y$ be the projection. Set $\xi:=\mathcal{O}_{Z}(1)$.  Then the general divisor $X\in |2 \xi-2mp^{*}H|$ is smooth, and $p_{\mid X}\colon X\to Y$ is an elementary Fano conic bundle.  
\end{proposition}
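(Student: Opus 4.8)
The plan is to establish the four ingredients bundled into the statement in the order in which they depend on one another: the formula for $-K_X$, the ampleness that makes $X$ Fano, the smoothness of the general member, and finally the conic bundle/elementary conclusion. The canonical class computation is the backbone, so I would start there. With $r=3$ the relative and absolute canonical formulas give $K_Z=-3\xi+p^{*}(K_Y+\det\mathcal{E})$; since $\det\mathcal{E}=\mathcal{O}(3D+mH)$ and $-K_Y=3D+(1-m)H$ one finds $K_Z=-3\xi+(2m-1)p^{*}H$, and because $[X]=2\xi-2mp^{*}H$, adjunction yields
\[
-K_X=\big(\xi+p^{*}H\big)\big|_{X}.
\]
To see this is ample I would use $\mathcal{O}_Z(\xi+p^{*}H)=\mathcal{O}_{\mathbb{P}(\mathcal{E}\otimes\mathcal{O}(H))}(1)$ with $\mathcal{E}\otimes\mathcal{O}(H)=\mathcal{O}(D+H)^{\oplus2}\oplus\mathcal{O}(D+(m+1)H)$. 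Both summands lie in the interior of $\Nef(Y)=\langle[D],[H]\rangle$, hence are ample line bundles, and a direct sum of ample line bundles is an ample bundle; so $\xi+p^{*}H$ is ample on $Z$ and $-K_X$ is ample on $X$. Thus, once smoothness is known, $X$ is Fano and $-K_X$ is automatically $f$-ample for $f:=p|_X$.

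For smoothness I would write the defining section as a quadratic form $\sum_{i\le j}a_{ij}x_ix_j$ in relative coordinates, where $a_{11},a_{12},a_{22}\in H^{0}(2D-2mH)$, $a_{13},a_{23}\in H^{0}(2D-mH)$ and $a_{33}\in H^{0}(2D)$. All of these have base locus $V$ except $a_{33}$, which is base-point-free and restricts to a nonzero constant on $V$ (as $2D|_V\cong\mathcal{O}_V$). Hence over $V$ only the term $a_{33}x_3^{2}$ survives, and the base locus of $|2\xi-2mp^{*}H|$ is $B=\{z:\,p(z)\in V,\ x_3(z)=0\}$, a $\mathbb{P}^{1}$-bundle over $V$ of codimension $3$. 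Away from $B$, Bertini gives smoothness of the general member. Along $B$ the fibre is a double line, so the fibre-direction derivatives of the equation vanish; smoothness at a point over $y_0\in V$ therefore reduces to non-vanishing of the base-direction differential of $b=t_1^{2}a_{11}+t_1t_2a_{12}+t_2^{2}a_{22}$. The decisive computation is $N_{V/Y}=\mathcal{O}_{\mathbb{P}^{3m}}(-2m)^{\oplus2}$ and $(2D-2mH)|_V\cong\mathcal{O}_{\mathbb{P}^{3m}}(-2m)$, whence $N^{*}_{V/Y}\otimes\mathcal{O}(2D-2mH)|_V\cong\mathcal{O}_V^{\oplus2}$ is trivial; the transverse leading terms of $a_{11},a_{12},a_{22}$ are then \emph{constant} pairs, and the obstruction collapses to requiring that two binary quadrics in $[t_1:t_2]$ have no common root, an open dense condition. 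This triviality is exactly what rescues smoothness despite the presence of base points.

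It then remains to verify that $f$ is a conic bundle: every fibre is a plane conic, so the only thing to check is equidimensionality, i.e. that no fibre is two-dimensional, equivalently that the quadratic form $\sum a_{ij}x_ix_j$ never vanishes identically over a point of $Y$. Over $V$ this is immediate from $a_{33}|_V\ne0$. Over $Y\setminus V$ one must analyse the simultaneous vanishing locus of all six coefficients — the rank-zero locus of the family of conics — and show it is empty for the general choice. This is the step I expect to be the main obstacle: the relevant coefficient systems $|2D-2mH|$ and $|2D-mH|$ are \emph{not} nef (their base locus is $V$ and they are negative on curves in $V$), so the naive expected dimension of the rank-drop loci is unreliable and the vanishing must be forced by exploiting the precise geometry of these non-nef systems along and near $V$. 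Granting this, every fibre is a one-dimensional conic, $f$ is a conic bundle, and by the previous step $X$ is Fano, so $f$ is a Fano conic bundle.

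Finally, elementariness is painless once the previous points are in place. Since $-K_Y=3D+(1-m)H$ is negative on a line $C\subset V\cong\mathbb{P}^{3m}$ (indeed $-K_Y\cdot C=1-m<0$), the target $Y$ is not Fano, and $\rho_Y=2$. Theorem~\ref{esempio} then applies directly: a Fano conic bundle with non-Fano base and $\rho_Y\le 2$ must satisfy $\rho_X-\rho_Y=1$, so $f$ is elementary. Altogether $f:X\to Y$ is an elementary Fano conic bundle onto a variety with $-K_Y$ not nef, as required.
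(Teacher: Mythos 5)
Your first three steps track the paper's proof closely. The adjunction computation $-K_{Z}-X=\xi+p^{*}H$ and the ampleness argument via $\mathcal{E}\otimes\mathcal{O}(H)$ being a direct sum of ample line bundles are literally the paper's. Your smoothness analysis along the base locus is an intrinsic reformulation of the paper's argument: the paper makes the explicit choice $s_{1}=\sigma^{\prime}\nu_{1}$, $s_{2}=s_{3}=\sigma^{\prime}\nu_{2}$, where $\nu_{1},\nu_{2}\in H^{0}(Y,\mathcal{O}_{Y}(D-2mH))$ cut out $V=G_{1}\cap G_{2}$, and checks that $dF_{\mid W}=\sigma^{\prime}_{\mid V}\,(z_{0}^{2}\,d\nu_{1}+z_{1}(2z_{0}+z_{1})\,d\nu_{2})$ never vanishes; your observation that $N^{*}_{V/Y}\otimes\mathcal{O}(2D-2mH)_{\mid V}\cong\mathcal{O}_{V}^{\oplus 2}$ is trivial, so that the transverse leading terms are constant pairs and smoothness reduces to two binary quadrics without a common root, is exactly the structural reason the paper's choice works ($z_{0}^{2}$ and $z_{1}(2z_{0}+z_{1})$ being such a pair). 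The elementariness step via Theorem \ref{esempio} is identical in both.

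The genuine gap is the step you flag and then grant: that the six entries of the matrix $S$ have no common zero on $Y\setminus V$, i.e.\ that $p_{\mid X}$ has no two-dimensional fiber. The paper settles this in one sentence from Lemma \ref{base_locus} (the base loci are $V$, ``hence for a general choice of these sections they have no common zeros''), but your distrust of this step is well founded: base locus $=V$ only says that each point off $V$ is avoided by a general member of each system separately, whereas simultaneous vanishing is six conditions on a variety of dimension $3m+2>6$. In fact your worry can be made into a concrete computation showing that no genericity appeal can close it. Let $E_{0}\in |D|$ be the divisor $\mathbb{P}(\mathcal{O}_{\mathbb{P}^{3m}}(2m)\oplus\mathcal{O}_{\mathbb{P}^{3m}}(2m))\subset Y$; it is disjoint from $V$, isomorphic to $\mathbb{P}^{3m}\times\mathbb{P}^{1}$, and $D_{\mid E_{0}}\sim 2mH+\ell$, where $\ell$ is the hyperplane class of the $\mathbb{P}^{1}$ factor. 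Then $(2D-2mH)_{\mid E_{0}}\sim 2mH+2\ell$, $(2D-mH)_{\mid E_{0}}\sim 3mH+2\ell$ and $(2D)_{\mid E_{0}}\sim 4mH+2\ell$ are all ample classes, and six effective divisors in ample classes on the $(3m+1)$-dimensional variety $E_{0}$ always share a point as soon as $3m+1\geq 6$, that is, for every $m\geq 2$ (iterate the fact that an ample divisor meets every positive-dimensional closed subvariety; if some entry restricts to the zero section the conclusion is only easier). At such a point all six entries of $S$ vanish, so \emph{every} member $X\in |2\xi-2mp^{*}H|$ contains the whole fiber $\mathbb{P}^{2}$ over it. So the claim you granted is not merely unproved: as the construction stands it fails, and the paper's own one-line justification via Lemma \ref{base_locus} is subject to the same objection; repairing it requires modifying the construction or producing an argument that genuinely excludes such common zeros, which is exactly the analysis you identified as missing.
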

\begin{proof} 
	By Lemma \ref{base_locus} we deduce that $\mathcal{E}\otimes \mathcal{O}_{Y}(-mH)=\mathcal{O}_{Y}(D-mH)^{\oplus 2}\oplus \mathcal{O}_{Y}(D)$ is globally generated outside $V$, and by \cite[Lemma 2.3.2]{LAZ} the tautological bundle $\xi-mp^{*}H$ of $\mathbb{P}_{Y}(\mathcal{E}\otimes \mathcal{O}(-mH))$ is spanned outside $p^{-1}(V)$. 
	
	Being $\Bs{(|2 \xi-2mp^{*}H|)}\subseteq \Bs{(|\xi-mp^{*}H|)}$, also $|2 \xi-2mp^{*}H|$ is globally generated outside $p^{-1}(V)$; in particular $|2 \xi-2mp^{*}H|\neq\emptyset$. 
	
	By Bertini's theorem it follows that a general divisor $X\in |2\xi-2mp^{*}H|$ is smooth outside $p^{-1}(V)$. 
	
	Now we prove that $-K_{Z}-X$ is ample on $Z$. This will imply that $X$ is Fano, once that we show the smoothness of $X$. We have: 
	\begin{center}
		$-K_{Z}=3 \xi-p^{*}(K_{Y}+3D+mH)=3 \xi+(1-2m)p^{*}H$.
	\end{center}
	
	Being $X\in |2 \xi-2mp^{*}H|$, one has $-K_{Z}-X=\xi+p^{*}H$. 
	
	Notice that $\xi+p^{*}H$ is the tautological bundle of $\mathbb{P}_{Y}(\mathcal{E}\otimes \mathcal{O}(H))=\mathbb{P}_{Y}(\mathcal{O}(D+H)^{\oplus 2}\oplus \mathcal{O}(D+(m+1)H))$ and being $\mathcal{E}\otimes \mathcal{O}(H)$ sum of ample line bundles of $Y$, we conclude that $-K_{Z}-X$ is ample on $Z$. 
	
	Now we show that the fibers of $p\colon X\to Y$ are isomorphic to plane conics.
	
	We observe that the divisor $X$ is given by a section of $H^{0}(Z, \mathcal{O}_{Z}(2 \xi-2mp^{*}H))\cong H^{0}(Y,\Sym^{2}{(\mathcal{E}\otimes \mathcal{O}(-mH)))}$. Being 
	\begin{center}
	$\Sym^{2}{(\mathcal{E}\otimes \mathcal{O}(-mH)))}= \mathcal{O}_{Y}(2(D-mH))^{\oplus 3}\oplus \mathcal{O}_{Y}(2D-mH)^{\oplus 2}\oplus \mathcal{O}_{Y}(2D)$, 
	\end{center}
	$X$ can be represented by a symmetric matrix of sections:
	$$
	S=\left( \begin{array}{ccc}
	s_{1} & s_{2} & \lambda_{1} \\
	s_{2} & s_{3} & \lambda_{2} \\
	\lambda_{1} & \lambda_{2} & \sigma
	\end{array}\right) 
	$$
	with $s_{1}, s_{2}, s_{3} \in H^{0}(Y, \mathcal{O}_{Y}(2(D-mH)))$, $\lambda_{1}, \lambda_{2}\in H^{0}(Y,\mathcal{O}_{Y}(2D-mH))$ and $\sigma\in H^{0}(Y,\mathcal{O}_{Y}(2D))$.
	We prove that the matrix $S$ vanishes nowhere. 
	
	Since $D$ is globally generated and $D_{\mid V}\cong \mathcal{O}_{V}$, $\sigma$ can be chosen such that $\sigma_{\mid V}$ is constant and non-zero.
	
	Then the possible common zeros of the sections of $S$ are outside $V$. But by Lemma \ref{base_locus} we know that $\Bs{(|2D-2mH|)}=\Bs{(|2D-mH|)}=V$, hence for a general choice of these sections they have no common zeros and this allows to deduce that a general divisor $X\in |2 \xi-2mp^{*}H|$ gives a conic fibration over $Y$. 
	
	It remains to show that $X$ is smooth in $p^{-1}(V)\cap X$. To this end, we take an open subset $U\subset Y$ which trivializes $\mathcal{E}$. If $[z_{0}:z_{1}:z_{2}]$ are projective coordinates of the trivialization of $\mathbb{P}(\mathcal{E})$ over $U$ compatible with the splitting of $\mathcal{E}$, then on $p^{-1}(U)\cong U\times \mathbb{P}^{2}$, the general divisor $X\in \ |2\xi-2mp^{*}H|$ is defined by the equation:
	\begin{center} 
	$F:= (z_{0}, z_{1}, z_{2}) \ S \ (z_{0}, z_{1}, z_{2})^{t}=0$,
	\end{center}
	namely:
	\begin{center}
		$F= s_{1} z_{0}^{2}+ 2 s_{2} z_{0} z_{1}+ 2 \lambda_{1} z_{0} z_{2}+ s_{3} z_{1}^{2}+ 2 \lambda_{2} z_{1} z_{2}+ \sigma z_{2}^{2}=0$. 
	\end{center}
	
	Using Lemma \ref{base_locus}, the equation $F$ over $V$ degenerates to $\sigma z_{2}^{2}=0$. Therefore, to prove the smoothness of a general $X$ we have to compute the differential $dF$ over $W:= p^{-1}(V)\cap X= p^{-1}(V)\cap \{z_{2}=0\}$. 
	
	
	
	Taking the covering of $\mathbb{P}^{2}$ given by the standard open subsets we can compute $dF$ locally over each $U\times B_{i}$ where $B_{i}=\{(z_{0}:z_{1}:z_{2})\in \mathbb{P}^{2}\mid z_{i}\neq 0\}\cong \mathbb{A}^{2}$ for every $i=0,1,2$. 
	
	Take $U\times B_{0}$, where $B_{0}\cong \mathbb{A}^{2}_{y,z}$, with coordinates $y=\frac{z_{1}}{z_{0}}$, $z=\frac{z_{2}}{z_{0}}$. We evaluate the differential of this equation over $(U\times B_{0}) \cap \{z=0\}$. Considering all the vanishing of the sections of the matrix $S$ (that are regular functions over $U$) due to Lemma \ref{base_locus}, this differential becomes:
	\begin{center}
	$ds_{1}+ 2 y ds_{2}+ y^{2} ds_{3}$.
	\end{center}
	In the same way, we consider $U\times B_{1}$, where $B_{1}\cong \mathbb{A}^{2}_{x,z}$, with coordinates $x=\frac{z_{0}}{z_{1}}$, $z=\frac{z_{2}}{z_{1}}$. Then we evaluate $dF$ over $(U\times B_{1}) \cap \{z=0\}$, getting:
	\begin{center}
	$x^{2} ds_{1}+ 2 x ds_{2}+ ds_{3}$. 
	\end{center}
	By these local computations, it follows that:
	\begin{center}
		$dF_{\mid W}= z_{0}^{2} ds_{1}+2 z_{0} z_{1} ds_{2}+ z_{1}^{2} ds_{3}$.
	\end{center}
	
	We have already observed that $V\subset Y$ is given by a complete intersection of two divisors $G_{1}, G_{2}\in |D- 2mH|$. Let $\nu_{1}$, $\nu_{2}$ be the two sections of $H^{0}(Y, \mathcal{O}_{Y}(D-2mH))$ corresponding to these two divisors.
	
	Since $G_{1}$ and $G_{2}$ intersect transversally along $V$, their differentials $d\nu_{1}$, $d\nu_{2}$ are independent everywhere on $V$. 
	
	Now, let $\sigma^{\prime} \in H^{0}(Y, \mathcal{O}_{Y}(D))$ be a section such that $\sigma^{\prime}_{\mid V}$ is constant and non-zero, and set $s_{1}=\sigma^{\prime} \nu_{1}$ and  $s_{2}=s_{3}=\sigma^{\prime} \nu_{2}$. 
	
	Then ${ds_{1}}_{\mid V}=\sigma^{\prime}_{\mid V} {d\nu_{1}}_{\mid V}$, ${ds_{2}}_{\mid V}= {ds_{3}}_{\mid V}={\sigma^{\prime}}_{\mid V} {d\nu_{2}}_{\mid V}$, and:
	\begin{center}
	$dF_{\mid W}={\sigma^{\prime}}_{\mid V} (z_{0}^{2}d\nu_{1}+z_{1} (2z_{0}+z_{1})d\nu_{2})$ 
	\end{center}
	does not vanish on $W$. Thus $X$ is smooth, so that $p_{\mid X}$ is a Fano conic bundle. By Theorem \ref{esempio} we conclude that $\rho_{X}-\rho_{Y}=1$ because $\rho_{Y}=2$, hence the statement.
\end{proof}

Using the above construction and results we prove Theorem \ref{new_examples}. 
\begin{proof} [Proof of Theorem \ref{new_examples}] Setting as in  Proposition \ref{prop_ex}. Take $f:=p_{\mid X}$. 
	
We have already proved that $f\colon X\to Y$ is an elementary Fano conic bundle and that $-K_{Y}$ is not nef. Finally, since $\pi\colon Y\to \mathbb{P}^{3m}$ is a $\mathbb{P}^{2}$-bundle, $\dim{(Y)}=3m+2$, and $\dim{(X)}=3(m+1)$.
\end{proof}
\begin{remark}
	Keeping the notation introduced in this subsection, by the proof of Proposition \ref{prop_ex} we can observe that the fibers of $f\colon X\to Y$ over $V$ are not reduced, so that by (\ref{singular_discriminant}), it follows that $V\subseteq \Sing{(\bigtriangleup_{f})}$. 
	
	Moreover, by (\ref{discriminant}) we know that $\bigtriangleup_{f}\sim 2 \det{\mathcal{E}}+3M$ where in our case $\mathcal{E}:=\mathcal{O}(D)^{\oplus 2}\oplus \mathcal{O}(D+mH)$, $M\sim -2mH$, so that $\bigtriangleup_{f}\sim 2(3D-2mH)$, and being $\Bs{(|2(3D-2mH)|)}\subseteq \Bs{(|3D-2mH|)}\subseteq \Bs{(|D-2mH|)}=V$, we get $\Sing{(\bigtriangleup_{f})}=V$. Finally, if $C\subset V\cong \mathbb{P}^{3m}$ is a line, we can compute that $\bigtriangleup_{f}\cdot C=-4m$.
\end{remark}
\section{Final Comments and Open Questions} \label{conclusion}

\noindent Let us denote by $f\colon X\to Y$ a non-elementary Fano conic bundle, and by $\bigtriangleup_{f}$ its discriminant divisor. Thanks to Theorem \ref{esempio} the positivity properties of $-K_{Y}$ are well known when $\rho_{X}-\rho_{Y}\geq 3$. The more mysterious case is when $\rho_{X}-\rho_{Y}= 2$. 

In particular, we do not know if there exists a Fano conic bundle $f\colon X\to Y$ with non-reduced fibers and $\rho_{X}-\rho_{Y}=2$. Using the classification of Fano 3-folds of \cite{MM2, MM}  and \cite[Theorem 1.1]{IO} it is easy to check that there are no examples of Fano conic bundles $f\colon X\to S$ where $\dim{(X)}=3$, $\rho_{X}-\rho_{Y}=2$ and $f$ has non-reduced fibers. Our questions are the following:

\begin{question} Are there examples of Fano conic bundles $f\colon X\to Y$ where $\rho_{X}-\rho_{Y}=2$ and $f$ has non-reduced fibers? 
\end{question}

\begin{question}
	Are there examples of Fano conic bundles $f\colon X\to Y$ where $\rho_{X}-\rho_{Y}=2$ and $Y$ is not Fano?
\end{question}
Notice that the two questions are related by Corollary \ref{Fano}. 

The following corollary is a consequence of some results of Viehweg and Koll$\acute{\text{a}}$r (see \cite{ANDREAS} and references therein). In particular, it follows applying \cite[Theorem 3.30]{ANDREAS} and \cite[Lemma 3.5]{ANDREAS} when $f\colon X\to Y$ is a Fano conic bundle:
\begin{corollary} \label{Andreas}
	Let $f\colon X\to Y$ be a Fano conic bundle. Denote by $\bigtriangleup_{f}$ its discriminant divisor. If $C\subset Y$ is an irreducible curve such that $-K_{Y}\cdot C<0$, then $C\subset \bigtriangleup_{f}$.
\end{corollary}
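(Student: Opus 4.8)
The plan is to argue by contraposition: I will show that the hypothesis $C\not\subset \bigtriangleup_f$ forces $-K_Y\cdot C\geq 0$. The starting point is to reinterpret the non-containment geometrically. By definition $\bigtriangleup_f=\{y\in Y\mid f^{-1}(y)\text{ is singular}\}$, so the complement $Y^{\circ}:=Y\setminus \bigtriangleup_f$ is exactly the locus over which every fiber of $f$ is a smooth conic. Since $X$ and $Y$ are smooth by Theorem \ref{Ando} and $f$ is equidimensional, hence flat, the restriction $f^{\circ}\colon f^{-1}(Y^{\circ})\to Y^{\circ}$ is a smooth $\mathbb{P}^{1}$-fibration. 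Consequently the condition $C\not\subset \bigtriangleup_f$ is equivalent to saying that $f$ is smooth over the generic point of $C$, which is precisely the input required by the positivity results for direct image sheaves.

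For the quantitative step I would pass to the normalization $n\colon \tilde{C}\to C$, set $\nu:=\iota\circ n\colon \tilde{C}\to Y$, and base-change the conic bundle along $\nu$ to obtain a conic bundle $g\colon S:=X\times_Y\tilde{C}\to \tilde{C}$ over a smooth curve, together with the finite map $h\colon S\to X$. Because $C\not\subset \bigtriangleup_f$, the curve $\tilde{C}$ meets $\bigtriangleup_f$ in only finitely many points, so $g$ is a $\mathbb{P}^{1}$-bundle over a dense open subset of $\tilde{C}$ and $S$ is a ruled surface away from these points. The key identity is $h^{*}(-K_X)=-K_{S/\tilde{C}}+g^{*}\nu^{*}(-K_Y)$, obtained from $-K_X=-K_{X/Y}+f^{*}(-K_Y)$ by flat base change; since $-K_X$ is ample and $h$ is finite, $h^{*}(-K_X)$ is nef. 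Intersecting this class with a minimal section $\Sigma_0$ of the ruling (for which $-K_{S/\tilde{C}}\cdot \Sigma_0\leq 0$) and using $\Sigma_0\cdot F=1$ then yields $0\leq h^{*}(-K_X)\cdot \Sigma_0 \leq \deg_{\tilde{C}}\nu^{*}(-K_Y)=-K_Y\cdot C$, which is the desired inequality; this is exactly the mechanism packaged abstractly as the semipositivity theorems of Viehweg and Kollár in \cite[Theorem 3.30]{ANDREAS} and \cite[Lemma 3.5]{ANDREAS}.

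It is worth emphasizing where the genuine content lies. When $C$ is rational, the conclusion already follows from Proposition \ref{PropWis}, which gives the stronger $C\subseteq \Sing(\bigtriangleup_f)$; the substance of the corollary is therefore the case of an arbitrary, possibly non-rational, curve $C$. For such curves bend-and-break arguments are unavailable, and one must instead rely on the positivity of direct images, which is insensitive to the genus of $C$.

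The main obstacle I expect is twofold. First, one must match the hypotheses of the general positivity theorem \cite[Theorem 3.30]{ANDREAS} to the present setting and, through \cite[Lemma 3.5]{ANDREAS}, identify the degeneracy locus appearing there with the discriminant $\bigtriangleup_f$, i.e.\ certify that ``smooth over the generic point of $C$'' is precisely the hypothesis the theorem demands. Second, the base-changed surface $S$ may acquire singularities over the finitely many points of $\tilde{C}\cap \bigtriangleup_f$, so some care is needed to ensure that these finitely many degenerate fibers do not disturb the numerical computation: passing to a resolution $\hat{S}\to S$, the extra exceptional curves over the singular fibers contribute effectively and hence can only strengthen the inequality $-K_Y\cdot C\geq 0$, so the generic positivity survives.
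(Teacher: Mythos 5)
Your overall strategy---contraposition, base change of the conic bundle to the normalization $\tilde{C}$, and playing the nefness of $h^{*}(-K_{X})$ against the decomposition $h^{*}(-K_{X})=-K_{S/\tilde{C}}+g^{*}\nu^{*}(-K_{Y})$---is indeed the geometric mechanism underlying the statement; note that the paper gives no such argument at all, deducing the corollary directly from \cite[Theorem 3.30]{ANDREAS} and \cite[Lemma 3.5]{ANDREAS}, so you are in effect trying to reprove the cited semipositivity input by hand in the surface case. But your quantitative step has a genuine gap, and it occurs in exactly the case you yourself single out as the substance of the corollary, namely $C$ non-rational. You assert that a minimal section $\Sigma_{0}$ of the ruling satisfies $-K_{S/\tilde{C}}\cdot\Sigma_{0}\leq 0$. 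On a relatively minimal model $S'=\mathbb{P}(\mathcal{F})\to\tilde{C}$ with $\mathcal{F}$ normalized, invariant $e=-\deg\mathcal{F}$ and $\Sigma_{0}^{2}=-e$, one computes $-K_{S'/\tilde{C}}\equiv 2\Sigma_{0}+eF$ (since $K_{S'/\tilde{C}}^{2}=0$ and the fiber degree is $2$), hence $-K_{S'/\tilde{C}}\cdot\Sigma_{0}=-e$. Your inequality is therefore equivalent to $e\geq 0$, which holds over $\mathbb{P}^{1}$ but fails as soon as $g(\tilde{C})\geq 1$: stable rank-two bundles give $e<0$ (Nagata's bound only guarantees $e\geq -g$; e.g.\ the Atiyah bundle on an elliptic curve has $e=-1$, so $-K_{S'/\tilde{C}}\cdot\Sigma_{0}=1>0$). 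For $e<0$ your chain of inequalities degenerates to $-K_{Y}\cdot C\geq e$, which is vacuous. Since for rational $C$ the conclusion already follows from Proposition \ref{PropWis}, as you note, the case your mechanism actually breaks on is the only one your proof was needed for.

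The gap is repairable, but not by your closing hand-wave about degenerate fibers. On a relatively minimal model the cleaner route avoids sections entirely: $K_{S'/\tilde{C}}^{2}=0$, so nefness of $h^{*}(-K_{X})=-K_{S'/\tilde{C}}+dF$ with $d=-K_{Y}\cdot C$ forces $0\leq\bigl(h^{*}(-K_{X})\bigr)^{2}=4d$, i.e.\ $d\geq 0$, uniformly in $e$. In the presence of singular conics, however, your claim that the exceptional curves ``contribute effectively and hence can only strengthen the inequality'' is unargued and the na\"{\i}ve sign is the wrong way around: a $(-1)$-curve $E$ inside a fiber of a smooth conic-bundle surface has $-K_{\hat{S}/\tilde{C}}\cdot E=1>0$, so degenerate fibers make the relative anticanonical \emph{more} positive on vertical curves and perturb the section computation unfavorably; moreover $S=X\times_{Y}\tilde{C}$ may be singular over $\nu^{-1}(\bigtriangleup_{f})$, and the comparison on a resolution introduces a vertical correction divisor whose sign (a discrepancy condition on $S$) is precisely what must be verified, not assumed. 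Either carry out the self-intersection argument after passing to a relatively minimal model (using that the pushforward of a nef class under a birational morphism of surfaces is nef), or do what the paper does and quote the Viehweg--Koll\'ar weak-positivity theorems, whose content is exactly this inequality independently of $g(\tilde{C})$ and of the degenerations.
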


On the other hand, by Proposition \ref{PropWis} we know that \textit{rational} curves $C\subset Y$ with $-K_{Y}\cdot C<0$ cover at most the closed subset $\Sing{(\bigtriangleup_{f})}$ which is properly contained in $\bigtriangleup_{f}$. 

By Corollary \ref{Fano} it seems that there is a close link between the kind of singularities of the fibers of $f$ and the positivity properties of $-K_{Y}$. This remark induces the following question posed by Ejiri in \cite{EJIRI}, where we refer the reader to \cite[Definition 1.3]{ALEXEEV} for the definition of semi-log canonical singularities. 
\begin{question} \label{positivity_question}
	Let $X$ be a smooth Fano variety. Let $f\colon X\to Y$ be an equidimensional fiber type contraction. Assume that $f$ is not a smooth morphism, but its fibers have some mild singularities, for example semi-log canonical singularities. Does $-K_{Y}$ have some good positivity properties?
\end{question} 

As observed in \cite[Remark 2.5]{IO}, when $f\colon X\to Y$ is a Fano conic bundle, Corollary \ref{Fano} gives a positive answer to Ejiri's question. Indeed, the fibers of the conic bundle $f$ have semi-log canonical singularities if and only if they are reduced (see \cite[Example 1.4]{ALEXEEV}).
\begin{remark}
	As we observed in the Introduction, by \cite[Theorem 2.9]{PROK} we know that the target of a contraction of a log Fano variety is log Fano, so that in the setting of Question \ref{positivity_question} (and thus in the specific case in which $f$ is a Fano conic bundle) one has that $-K_{Y}$ is always a big divisor of $Y$.
\end{remark}
\begin{remark} Notice that if in Question \ref{positivity_question} we do not assume that $f$ is equidimensional, it is easy to find counterexamples. For instance in \cite[Example 3.16 (2)]{DEB} we find examples of divisorial contractions $\varphi\colon Z\to Y$, where $Z$ is smooth and Fano, the fibers of $\varphi$ are smooth, but $Y$ is not Fano. If we take the natural projection $\pi\colon\mathbb{P}^{1}\times Z\to Z$, then $\varphi\circ \pi\colon \mathbb{P}^{1}\times Z\to Y$ gives a fiber type contraction of a Fano variety, where the fibers are smooth, but $Y$ is not Fano. 
\end{remark}

\textit{Acknowledgements}. This paper is part of my Ph.D. thesis. I would like to thank my advisor Cinzia Casagrande for her constant guidance, support and for many valuable conversations and advices.

\nocite{*}
\bibliographystyle{plain} 
\bibliography{biblio}

\begin{thebibliography}{10}

\bibitem{ALEXEEV}
V.~ALEXEEV.
\newblock Higher dimensional analogues of stable curves.
\newblock {\em Proceedings of {M}adrid {ICM}2006, Eur. Math. Soc. Pub. House},
  pages 515--536, 2006.

\bibitem{ANDO}
T.~ANDO.
\newblock On extremal rays of the higher-dimensional varieties.
\newblock {\em Invent. Math.}, 81:347--357, 1985.

\bibitem{BARTH}
W.~BARTH.
\newblock Counting singularities of quadratic forms on vector bundles.
\newblock In {\em Vector bundles and differential equations ({P}roc. {C}onf.,
  {N}ice, 1979)}, pages 1--19. {P}rogr. {M}ath., 7, {B}irkh{\"a}user, 1980.

\bibitem{B}
A.~BEAUVILLE.
\newblock Varieties de {P}rym et {J}acobiennes interm$\acute{\text{e}}$diaires.
\newblock {\em Ann. Scient. $\acute{E}$c. Norm. Sup.}, pages 309--391, 1977.

\bibitem{BIR}
C.~BIRKAR and Y.~CHEN.
\newblock Images of manifolds with semi-ample anti-canonical divisor.
\newblock {\em to appear in J. Algebraic Geom}, 2016.

\bibitem{DEB}
O.~DEBARRE.
\newblock {\em Higher-dimensional algebraic geometry}.
\newblock Universitext, Springer-Verlag, New York, 2001.

\bibitem{EJIRI}
S.~EJIRI.
\newblock Positivity of anti-canonical divisors and {F}-purity of fibers.
\newblock ArXiv:1604.02022v2, 2016.

\bibitem{FUJ}
O.~FUJINO and Y.~GONGYO.
\newblock On images of weak {F}ano manifolds.
\newblock {\em Mathematische Zeitschrift}, 270(1-2):531--544, 2012.

\bibitem{ANDREAS}
A.~H{\"O}RING.
\newblock Positivity of direct image sheaves-a geometric point of view.
\newblock {\em Enseign. Math.(2)}, 56(1-2):87--142, 2010.

\bibitem{K}
J.~KOLL{\'A}R, Y.~MIYAOKA, and S.~MORI.
\newblock Rational connectedness and boundedness of {F}ano manifolds.
\newblock {\em Journal of Differential Geometry}, 36(3):765--779, 1992.

\bibitem{KOLLAR}
J.~KOLL\'AR and S.~MORI.
\newblock {\em Birational geometry of algebraic varieties}.
\newblock Cambridge Tracts in Math.,134, Cambridge Univ. Press, Cambridge,
  1998.

\bibitem{LAZ}
R.~LAZARSFELD.
\newblock {\em Positivity in algebraic geometry $I$}.
\newblock Ergebn. Math. Grenzg. Springer, 2004.

\bibitem{MORI}
S.~MORI.
\newblock Threefolds whose canonical bundles are not numerically effective.
\newblock In {\em Algebraic Threefolds}, pages 155--189. Springer, 1982.

\bibitem{MM2}
S.~MORI and S.~MUKAI.
\newblock On {F}ano 3-folds with $b_{2}\geqslant 2$.
\newblock {\em Advanced Studies in Pure Mathematics 1}, pages 101--129, 1983.

\bibitem{MM}
S.~MORI and S.~MUKAI.
\newblock Classification of {F}ano 3-folds with $b_{2}\geqslant 2$, {I}.
\newblock In {\em Algebraic and Topological Theories}, pages 496--545, 1985.

\bibitem{PROK}
G.~PROKHOROV and V.~V. SHOKUROV.
\newblock Towards the second main theorem on complements.
\newblock {\em J. Algebraic Geom}, 18(1):151--199, 2009.

\bibitem{IO}
E.A. ROMANO.
\newblock Non-elementary {F}ano conic bundles.
\newblock arXiv:1605.07590, 2016.

\bibitem{SARK}
V.G. SARKISOV.
\newblock On conic bundle structures.
\newblock {\em Mathematics of the USSR-Izvestiya}, 20:355--390, 1983.

\bibitem{WIS}
J.A. WI\'SNIEWSKI.
\newblock On contractions of extremal rays of {F}ano manifolds.
\newblock {\em J. reine angew. Math.}, 417:141--157, 1991.

\end{thebibliography}

\end{document}